\newtheorem{theorem}{Theorem}[section]
\newtheorem{lemma}[theorem]{Lemma}
\newtheorem{proposition}[theorem]{Proposition}
\theoremstyle{definition}
\newtheorem{remark}[theorem]{Remark}
\title[Adaptive orthonormal systems for matrix-valued functions]{Adaptive orthonormal systems for matrix-valued functions}
\author[D. Alpay]{Daniel Alpay}
\address{(DA) Department of Mathematics\\
Ben-Gurion University of the Negev\\
Beer-Sheva 84105 Israel}
\email{dany@math.bgu.ac.il}
\author[F. Colombo]{Fabrizio Colombo}
\address{(FC) Politecnico di
Milano\\Dipartimento di Matematica\\Via E. Bonardi, 9\\20133 Milano,
Italy}
\email{fabrizio.colombo@polimi.it}
\author[T. Qian]{Tao Qian}
\address{(TQ) Department of Mathematics\\
Universityof Macau\\
Macau}
\email{fsttq@umac.mo}
\author[I. Sabadini]{Irene Sabadini}
\address{(IS) Politecnico di
Milano\\Dipartimento di Matematica\\Via E. Bonardi, 9\\20133 Milano\\Italy}
\email{irene.sabadini@polimi.it}
\thanks{
The authors thank
Macao Science and Technology Fund FDCT/098/2012/A3
and
Macao Science and Technology Fund FDCT/099/2014/A2.
}
\begin{document}
\maketitle
\begin{abstract}

In this paper we consider functions in the Hardy space $\mathbf{H}_2^{p\times q}$ defined in the unit disc of matrix-valued. We show that it is possible, as in the scalar case, to decompose those functions as linear combinations of suitably modified matrix-valued Blaschke product, in an adaptive way. The procedure is based on a generalization to the matrix-valued case of the maximum selection principle which involves not only selections of suitable points in the unit disc but also suitable orthogonal projections.  We show that the maximum selection principle gives rise to a convergent algorithm. Finally, we discuss the case of real-valued signals.

 \end{abstract}
\noindent AMS Classification: 47A56, 41A20, 30H10.

\noindent {\em Key words}: Matrix-valued functions and Hardy spaces, matrix-valued Blaschke products, maximum selection principle, adaptive decomposition.

\date{today}
\tableofcontents
\section{Introduction}
\setcounter{equation}{0}

Functions in the Hardy space  $\mathbf{H}_2(\mathbb D)$
of the open unit disc $\mathbb D$ can be decomposed
into linear combinations of functions which are modified Blaschke products
\begin{equation}\label{TMsystem}
B_n(z)= \frac{\sqrt{1-|a_n|^2}}{1-z\overline{a_n}} \prod_{k=1}^{n-1} \frac{z-a_k}{1-z\overline{a_k}}, \qquad n=1,2,\ldots
\end{equation}
where the points
$a_n\in\mathbb D$
are adaptively chosen according  to the
function to be decomposed, see \cite{QWa1}. It is important to note that these points do not necessarily satisfy the so-called hyperbolic non-separability condition
\begin{equation}\label{conditionTM}
\sum_{n=1}^\infty 1-|a_n|=\infty.
\end{equation}
The  system \eqref{TMsystem}, which is orthonormal, is called Takenaka--Malmquist
system.
It is a basis of the Hardy space $\mathbf{H}_2(\mathbb D)$ and, more in general, of $\mathbf{H}_p(\mathbb D)$, $1\leq p\leq\infty$, if and only if  \eqref{conditionTM} is satisfied.
It is possible to show, see \cite{QWa1}, that the points $a_n$ can be chosen  to decompose a given  function
$f$ into basic functions, each of which has nonnegative analytic instantaneous frequency. A system \eqref{TMsystem} satisfying this property is called an
adaptive rational orthonormal system. A signal that possesses a nonnegative analytic instantaneous frequency function is said to be mono-component. It can be real or complex-valued. If, in particular, taking $a_1=0,$ then the boundary values of the modified Blaschke products $B_n$ are mono-component for all $n\in\mathbb N$.
We note that the condition \eqref{conditionTM} is not necessarily satisfied, and so the system is not necessarily complete
in $\mathbf{H}_2(\mathbb D)$. However, the convergence to $f$ is fast.
\\
As we said, the adaptive decomposition is designed in order to obtain a decomposition of a functions into mono-component signals. This method has been intensively studied in the past few years, see \cite{WQW2012,Qian_106,QWa1,Qian2014,QLS2013,QWP2009}. It gives rise to an algorithm which is a variation of the greedy algorithm, see \cite{MZ, Tem, QWo2015}.
\\
An algorithm to perform an adaptive decomposition, given $f$, can be assigned as follows.
One considers a so-called dictionary $\mathcal D,$ being a family of elements of unit norm whose span is dense in the Hilbert space $\mathcal H.$ Given $f\in\mathcal H$ we select
$u_1, \ldots , u_n\in\mathcal D$ such that
\[
f=\sum_{k=1}^\infty\langle f_k ,u_k\rangle u_k
\]
where the functions $f_k$ are defined inductively, starting from $f_1=f$ and setting
\[
f_k=f-\sum_{\ell=1}^{k-1} \langle f_\ell, u_\ell\rangle u_\ell,
\]
where $\langle\cdot, \cdot\rangle$ denotes the inner product in $\mathcal H$.\\
In the paper \cite{QWa1} $\mathcal H=\mathbf H_2(\mathbb D)$ with its standard inner product, the dictionary consists of
the normalized Szeg\"o kernels,
\[
\mathcal D=\left\{e_a(z) =\dfrac{\sqrt{1-|a|^2}}{1-z\overline{a}}, \ \ a\in\mathbb D\right\}.
\]
Note that the reproducing kernel property of $e_a$ in $\mathbf H_2(\mathbb D)$ yields
\[
\langle f, e_a\rangle = \sqrt{1-|a|^2}f(a).
\]
Let $f\in\mathbf H_2(\mathbb D)$ and set $f_1=f$. For any $a_1\in\mathbb D$
\begin{equation}\label{twosum}
f(z)= \langle f_1, e_{a_1}\rangle e_{a_1}(z) +f_2(z)\frac{z-a_1}{1-z\overline{a_1}}
\end{equation}
where
\[
f_2(z)=\frac{f_1(z)-\langle f_1, e_{a_1}\rangle e_{a_1}(z)}{\frac{z-a_1}{1-z\overline{a_1}}}.
\]
One can show that $f_2\in \mathbf H_2(\mathbb D)$ and so the procedure can be repeated.
The transformation from $f_1$ to $f_2$ is called generalized backward-shift.
The two summands in \eqref{twosum} are orthogonal, thus
\[
\|f\|^2= |\langle f_1, e_{a_1}\rangle|^2 +\|f_2\|^2.
\]
The maximal selection principle asserts that it is possible to choose $a_1\in\mathbb D$ such that
\[
a_1=\max\{|\langle f_1, e_{a}\rangle|^2=(1-|a|^2) |f_1(a)|^2,\ \ a\in\mathbb D\}.
\]
The procedure can be iterated and after $n$ steps one has
\[
f(z)=\sum_{k=1}^n \langle f_k, e_{a_k}\rangle B_k(z) +f_{n+1}(z) \prod_{k=1}^n \dfrac{z-a_k}{1-z\overline{a_k}},
\]
where
\[
a_k=\max\{|\langle f_k, e_{a}\rangle|^2=(1-|a|^2) |f_k(a)|^2,\ \ a\in\mathbb D\}, \quad k=1,\ldots, n
\]
and
\begin{equation}
\label{red0}
f_k(z)=\frac{f_{k-1}(z)-\langle f_{k-1}, e_{a_{k-1}}\rangle e_{a_{k-1}}(z)}{\frac{z-a_{k-1}}{1-z\overline{a_{k-1}}}}.
\end{equation}
The function $f_k$ is called the $k$-th reduced remainder (see \cite[(11) p. 850]{Qian_106}). Its matrix-valued counterpart is given by \eqref{reduced2}. One can easily show the relations
\begin{equation}\label{equivalence}
\langle f_k, e_{a_k}\rangle = \langle g_k, B_k \rangle =\langle f, B_k \rangle ,\end{equation}
where $g_k$ is the $k$-th standard remainder, defined through
\[ g_k= f-\sum_{l=1}^{k-1} \langle f, B_l \rangle B_l.\]

As before, the orthogonality of the summands and the fact that $B_k$ is unimodular on $\partial\mathbb D$, give
\[
 \|f(z)-\sum_{k=1}^n \langle f_k, e_{a_k}\rangle B_k(z)\|^2= \|f(z)\|^2-\sum_{k=1}^n |\langle f_k, e_{a_k}\rangle |^2= \|f_{n+1}\|^2.
\]
Since it can be shown that $ \|f_{n+1}\|\to 0$ as $n\to\infty$ (see \cite[Theorem 2.2]{QWa1}), we have the  formula
\[
f(z)=\sum_{k=1}^\infty \langle f_k, e_{a_k}\rangle B_k(z)
\]
called adaptive Fourier decomposition, abbreviated as AFD.\smallskip

In this paper, we extend some of the results of \cite{QWa1} to the matrix-valued case. For $w\in\mathbb D$ we will use the notations $e_w$ and $b_w$ for the normalized Cauchy kernel and Blaschke factor at the point $w$ respectively, that is:
\begin{equation}\label{Szego and Moebius}
e_w(z)=\frac{\sqrt{1-|w|^2}}{1-z\overline{w}}\quad \text{and}\quad b_w(z)=\frac{z-w}{1-z\overline{w}}
\end{equation}
The Szeg\"o dictionary now consists of the $\mathbb C^{p\times p}$-valued functions $Pe_w$, where $w$ belongs to the open unit disk $\mathbb D$ and $P\in
\mathbb C^{p\times p}$ is any orthogonal projection, that satisfies $P=P^2=P^*$.

\begin{remark}{\rm
In view of the polydisk setting, the operator-valued case will be considered in a later publication (see  \cite{abr2, MR1839829, QWo2015} for an approach to the polydisk setting using operator-valued functions).}
\end{remark}

We denote by $\mathbf H_2^{p\times q}$ the space of $p\times q$ matrices with entries in $\mathbf H_2(\mathbb D)$. When $q=1$ we write $\mathbf H_2^{p}$ rather than $\mathbf H_2^{p\times q}$.\smallskip

A function $F\in\mathbf H_2^{p\times q}$ if and only if it can be written as
\begin{equation}
\label{appro1}
F(z)=\sum_{n=0}^\infty F_nz^n,
\end{equation}
where $F_\ell\in\mathbb C^{p\times q}$, $\ell=1,2,\ldots$, are such that
\begin{equation}
\label{appro2}
\sum_{n=0}^\infty {\rm Tr}~(F_n^*F_n)<\infty.
\end{equation}
Let $G$ be another element of $\mathbf H_2^{p\times q}$, with power series expansion $G(z)=\sum_{n=1}^\infty G_nz^n$ at the origin.
We set
\begin{equation}
\label{innerprod}
[F,G]=\sum_{n=0}^\infty G_n^*F_n\,\,\in\,\,\mathbb C^{q\times q}
\end{equation}
and
\[
\|F\|^2={\rm Tr}\, [F,F]=\sum_{n=0}^\infty {\rm Tr}~(F_n^*F_n).
\]
We note that \eqref{innerprod} can be rewritten as
\begin{equation}
\lim_{\substack{r \rightarrow 1\\ r\in(0,1)}}\frac{1}{2\pi}\int_0^{2\pi}G(re^{it})^*F(re^{it})dt
\label{innerprod2}
\end{equation}
and so we also have
\begin{equation}
\label{appro21}
\sum_{n=0}^\infty {\rm Tr}~(G_n^*F_n)=\lim_{\substack{r \rightarrow 1\\ r\in(0,1)}}\frac{1}{2\pi}\int_0^{2\pi}
{\rm Tr}\,G(re^{it})^*F(re^{it})dt.
\end{equation}

Most, if not all, the material of Sections \ref{sec3} and \ref{sec2} is classical. Some proofs are provided for the convenience of the reader.
We refer to \cite{ag2,ag} for a study of these using state space methods.\\

An important condition in the algorithm is whether $F$ is a cyclic vector for the backward shift operator $R_0$, that is, whether
the closed linear span $\mathcal M(F)$ of the functions
\[
R_0^nFX,\quad n=0,1,2,\ldots\,\,\, \text{and}\,\,\, X\in\mathbb C^{q\times q}
\]
is strictly included in $\mathbf H_2^{p\times q}$ or not.
\section{The maximum selection principle}
\setcounter{equation}{0}
In this section we show that the maximum selection principle holds also in the matrix valued case. It allows to adaptively choose a sequence of points together with orthogonal projections for any given function in the Hardy space. We note that this selection principle does not exclude the possibility that the obtained sequence of points contains elements repeating more than once.
\begin{proposition}
Let $k_0\in\left\{1,\ldots, p\right\}$, and let $F\in\mathbf H_2^{p\times q}$. There exists $w_0\in\mathbb D$ and
an orthogonal projection $P_0$ of rank $k_0$ such that
\begin{equation}
\label{toronto}
(1-|w_0|^2)\left({\rm Tr}~[P_0F(w_0),F(w_0)]\right)\,\,\,\text{is maximum.}
\end{equation}
\end{proposition}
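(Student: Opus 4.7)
The plan is to split the maximization in \eqref{toronto} into an inner optimization over projections (with $w_0$ fixed) and an outer optimization over $w_0\in\mathbb D$. The inner step admits a closed-form answer via the Ky Fan maximum principle, and the outer step reduces to maximizing a continuous scalar function on $\mathbb D$ that vanishes at the boundary, so existence will follow by compactness.

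For the inner step, fix $w\in\mathbb D$. Viewing $F(w)\in\mathbb C^{p\times q}$ as a constant $\mathbf H_2^{p\times q}$-function, the defining formula \eqref{innerprod} yields $[P_0F(w),F(w)]=F(w)^*P_0F(w)$, and by cyclicity of the trace
\[
{\rm Tr}\,[P_0F(w),F(w)]\ =\ {\rm Tr}\bigl(P_0M(w)\bigr),\qquad M(w):=F(w)F(w)^*\in\mathbb C^{p\times p}.
\]
Since $M(w)\geq 0$, the Ky Fan maximum principle gives
\[
\max_{\substack{P=P^*=P^2\\ {\rm rank}\,P=k_0}}{\rm Tr}\bigl(PM(w)\bigr)\ =\ \lambda_1(w)+\cdots+\lambda_{k_0}(w),
\]
where $\lambda_1(w)\geq\cdots\geq\lambda_p(w)\geq 0$ are the eigenvalues of $M(w)$, the maximum being attained by the orthogonal projection onto the span of the corresponding top $k_0$ eigenvectors. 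Setting $\varphi(w):=(1-|w|^2)\sum_{j=1}^{k_0}\lambda_j(w)$, the proposition reduces to showing that $\varphi$ attains its supremum on $\mathbb D$. Continuity of $\varphi$ on $\mathbb D$ is immediate: $F$ is holomorphic, so $M$ is continuous, and by Weyl's inequality the eigenvalues of a continuously varying Hermitian matrix depend continuously on it.

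The main obstacle is the boundary decay $\varphi(w)\to 0$ as $|w|\to 1^-$. Because $\sum_{j=1}^{k_0}\lambda_j(w)\leq{\rm Tr}\,M(w)=\sum_{i,j}|f_{ij}(w)|^2$ where the $f_{ij}$ are the entries of $F$, it is enough to show that $(1-|w|^2)|f(w)|^2\to 0$ for every $f\in\mathbf H_2(\mathbb D)$. I would prove this in the standard way: approximate $f$ in $\mathbf H_2$-norm by a polynomial $f_N$, use the reproducing-kernel bound $\sqrt{1-|w|^2}\,|f(w)-f_N(w)|\leq\|f-f_N\|$, and note that $\sqrt{1-|w|^2}\,|f_N(w)|\to 0$ as $|w|\to 1$ since $f_N$ is continuous on $\overline{\mathbb D}$. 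With continuity and boundary decay in hand, I would extend $\varphi$ to $\overline{\mathbb D}$ by setting $\varphi\equiv 0$ on $\partial\mathbb D$; compactness of $\overline{\mathbb D}$ forces the continuous extension to attain its maximum, and since the case $F\equiv 0$ is trivial, one has $\varphi(w)>0$ at some interior point, so the maximizer $w_0$ must lie in $\mathbb D$. Taking $P_0$ to be any rank-$k_0$ projection onto a top-eigenvalue invariant subspace of $M(w_0)$ (completed by arbitrary orthogonal directions if ${\rm rank}\,M(w_0)<k_0$) then finishes the proof.
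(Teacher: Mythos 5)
Your proof is correct. The analytic heart of your argument --- establishing that $(1-|w|^2)\,{\rm Tr}\,[PF(w),F(w)]$ decays to $0$ as $|w|\to 1^-$ by approximating $F$ in norm by a polynomial and invoking the reproducing-kernel bound $\sqrt{1-|w|^2}\,|f(w)|\le\|f\|$ --- is exactly the paper's argument. Where you genuinely differ is in the treatment of the projection variable: the paper keeps $P_0=\xi^*\xi$ as a free parameter, with $\xi\in\mathbb C^{k_0\times p}$ satisfying $\xi\xi^*=I_{k_0}$, bounds everything uniformly in $\xi$ via ${\rm Tr}\,F(w)^*\xi^*\xi F(w)\le{\rm Tr}\,F(w)^*F(w)$, and then (somewhat tersely) concludes existence of a joint maximizer over $(w,\xi)$; this implicitly relies on compactness of the set of rank-$k_0$ projections and continuity in $\xi$, neither of which the paper spells out. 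You instead eliminate the projection entirely with the Ky Fan maximum principle, obtaining the closed form $\varphi(w)=(1-|w|^2)\bigl(\lambda_1(w)+\cdots+\lambda_{k_0}(w)\bigr)$ with $M(w)=F(w)F(w)^*$, which reduces the existence question to a single continuous scalar function on $\overline{\mathbb D}$ vanishing on the boundary. This buys you a cleaner compactness argument (no Grassmannian needed), an explicit description of both the optimal value and the optimal projection, and a more complete justification of the attainment step than the paper gives; the cost is the appeal to Ky Fan and to eigenvalue continuity, which the paper's more elementary uniform bound avoids.
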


\begin{proof}
We first recall that for $f\in\mathbf H_2(\mathbb D)$ (that is, $p=q=1$), with power series $f(z)=\sum_{n=0}^\infty f_nz^n$, and for $w\in\mathbb D$, we have
\begin{equation}
\label{ineq11}
\sqrt{1-|w|^2}|f(w)|=|[f,e_{w}]|\le \|f\|.
\end{equation}
Let $F=(f_{ij})\in\mathbf H_2^{p\times q}$, where the $f_{ij}\in\mathbf H_2(\mathbb D)$ ($i=1,\ldots, p$ and $j=1,\ldots q$), and $\xi\in\mathbb C^{k_0\times p}$ such that $\xi\xi^*=I_{k_0}$. Then,
\[
\begin{split}
{\rm Tr}~[\xi F(w),\xi F(w)]&={\rm Tr}~F(w)^*\xi^*\xi F(w)\\
&\le {\rm Tr}~F(w)^*F(w)\\
&=\sum_{i=1}^p\sum_{j=1}^q|f_{ij}(w)|^2.
\end{split}
\]
Using \eqref{ineq11} for every $f_{ij}$, we obtain
\begin{equation}
\label{macau1234}
(1-|w|^2)\left({\rm Tr}~[\xi F(w),\xi F(w)]\right)\le\sum_{i=1}^p\sum_{j=1}^q\|f_{ij}\|^2=\|F\|^2.
\end{equation}
Let $\epsilon>0$. In view of \eqref{appro1}-\eqref{appro2} there exists a $\mathbb C^{p\times q}$-valued polynomial $P$ such that $\|F-P\|\le \epsilon$.
We have
\[
\begin{split}
(1-|w|^2)\left({\rm Tr}~[\xi F(w),\xi F(w)]\right)&\\
&\hspace{-3cm}
=(1-|w|^2)\left({\rm Tr}~[\xi(F-P)(w)+\xi P(w),\xi(F-P)(w)+\xi P(w)]\right)\\
&\hspace{-3cm}=(1-|w|^2)\|\xi(F-P)(w)+\xi P(w)\|^2\\
&\hspace{-3cm}\le(1-|w|^2)\left(\|\xi(F-P)(w)\|+\|\xi P(w)\|\right)^2\\
&\hspace{-3cm}\le2(1-|w|^2)\|\xi(F-P)(w)\|^2+2(1-|w|^2)\|\xi P(w)\|^2\\
&\hspace{-3cm}\le 2\|F-P\|^2+2(1-|w|^2)\|\xi P(w)\|^2 \quad (\text{where we have used \eqref{macau1234}})\\
&\hspace{-3cm}\le 2\epsilon^2+2(1-|w|^2)\|P(w)\|^2 .
\end{split}
\]
This ends the proof since $(1-|w|^2)\|P(w)\|^2$ tends to $0$ as $w$ approaches the unit circle and since $\xi^*\xi$ is a rank $k_0$ orthogonal projection.
\end{proof}

We write
\begin{equation}
\label{bastille}
F(z)=P_0F(w_0)e_{w_0}(z)\sqrt{1-|w_0|^2}+F(z)-P_0F(w_0)e_{w_0}(z)\sqrt{1-|w_0|^2}.
\end{equation}
\begin{lemma}
Let
\[
\begin{split}
H(z)&=F(z)-P_0F(w_0)e_{w_0}(z)\sqrt{1-|w_0|^2}\\
H_0(z)&=P_0F(w_0)e_{w_0}(z)\sqrt{1-|w_0|^2}.
\end{split}
\]
It holds that
\begin{equation}
\label{xiHz0}
P_0H({w_0})=0
\end{equation}
and
\begin{equation}
\label{orthodecomp}
[F,F]=[H_0,H_0]+[H,H].
\end{equation}
\end{lemma}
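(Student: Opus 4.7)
The plan is to prove the two assertions in turn, using only the reproducing-kernel-type identity for matrix-valued functions in $\mathbf H_2^{p\times q}$ and the idempotent/self-adjoint properties of $P_0$.

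For the identity $P_0H(w_0)=0$, I would simply evaluate $H$ at $z=w_0$. Since $e_{w_0}(w_0)=\frac{\sqrt{1-|w_0|^2}}{1-|w_0|^2}=\frac{1}{\sqrt{1-|w_0|^2}}$, the normalization factor $\sqrt{1-|w_0|^2}$ in the definition of $H_0$ cancels and yields $H_0(w_0)=P_0F(w_0)$. Therefore $H(w_0)=F(w_0)-P_0F(w_0)=(I-P_0)F(w_0)$, and applying $P_0$ gives $P_0(I-P_0)F(w_0)=0$ because $P_0=P_0^2$.

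For the orthogonal decomposition \eqref{orthodecomp}, write $F=H_0+H$ and expand by $\mathbb C^{q\times q}$-bilinearity of $[\cdot,\cdot]$:
\[
[F,F]=[H_0,H_0]+[H,H]+[H_0,H]+[H,H_0].
\]
The task reduces to showing the cross terms vanish. The key step is to establish a reproducing-kernel identity in $\mathbf H_2^{p\times q}$: for any $A\in\mathbb C^{p\times q}$ and any $w\in\mathbb D$,
\[
[F,Ae_w]=\sqrt{1-|w|^2}\,A^*F(w),
\]
which follows immediately from the power-series definition \eqref{innerprod} by plugging in the coefficients $(Ae_w)_n=\sqrt{1-|w|^2}\,\overline{w}^nA$ and summing the geometric series $\sum w^nF_n=F(w)$. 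Applying this with $A=\sqrt{1-|w_0|^2}\,P_0F(w_0)$, so that $H_0=Ae_{w_0}$, one gets $[F,H_0]=(1-|w_0|^2)F(w_0)^*P_0F(w_0)$; the same identity applied with $F$ replaced by $H_0$ (and using $P_0^*P_0=P_0$) gives the same value for $[H_0,H_0]$. Hence
\[
[H,H_0]=[F,H_0]-[H_0,H_0]=0,
\]
and since $[H_0,H]=[H,H_0]^*=0$ as well, the cross terms drop out and \eqref{orthodecomp} follows.

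The only non-routine point is recording the reproducing-kernel identity in the matrix-valued setting, but this is a direct consequence of the definition \eqref{innerprod} of the inner product; the rest is bookkeeping with the idempotence and self-adjointness of $P_0$. No convergence issues arise because all series involved are either finite combinations or geometric series in $|w_0|<1$.
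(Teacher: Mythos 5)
Your proof is correct and follows essentially the same route as the paper: evaluate at $w_0$ and use $P_0=P_0^2$ for \eqref{xiHz0}, then kill the cross terms via the reproducing-kernel identity $[G,Ae_w]=\sqrt{1-|w|^2}\,A^*G(w)$. The only cosmetic difference is that the paper applies this identity directly to $[H,H_0]$ (which equals $(1-|w_0|^2)F(w_0)^*P_0H(w_0)=0$ by \eqref{xiHz0}), whereas you obtain the same conclusion by computing $[F,H_0]$ and $[H_0,H_0]$ separately and subtracting.
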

\begin{proof}
First we have \eqref{xiHz0} since
\[
P_0H(w_0)=P_0F(w_0)-P_0F(w_0)e_{w_0}(w_0)\sqrt{1-|w_0|^2}=0.
\]
Using \eqref{xiHz0} we have
\[
[H,P_0F(w_0)e_{w_0}(z)\sqrt{1-|w_0|^2}]=F(w_0)^*P_0H(w_0)(1-|w_0|^2)=0.
\]
So, $[H,H_0]=0$ and
\[
[F,F]=[H_0+H,H_0+H]=[H_0,H_0]+[H,H].
\]
\end{proof}

To proceed and take care of the condition \eqref{xiHz0} (that is, in the scalar case, to divide by a Blaschke factor) we first need to define matrix-valued Blaschke factors. This is done in the next section.

\section{Matrix-valued Blaschke factors}
\label{sec3}
\setcounter{equation}{0}
Matrix-valued Blaschke factors originate with the work of Potapov \cite{pootapov} and can be defined (up to right multiplicative constant) as
\begin{equation}
B_{w_0,{P_0}}(z)=I_p-P_0+P_0b_{w_0}(z),
\label{potapovvv}
\end{equation}
where for $w_0\in\mathbb D,$ $b_{w_0}$ is defined as in (\ref{Szego and Moebius}), and $P_0\in\mathbb C^{p\times p}$ is any orthogonal projection. The {\sl degree} ${\rm deg}\, B_{w_0,P_0}$
of the Blaschke factor is by definition the rank of the projection $P_0$. When considering infinite products, it will be more convenient to consider for $w_0\not=0$ the Blaschke factor
\begin{equation}
\label{normalized}
\mathcal B_{w_0,{P_0}}(z)=I_p-P_0+P_0\frac{|w_0|}{w_0}\frac{w_0-z}{1-z\overline{w_0}}.
\end{equation}
Note that
\begin{equation}
\label{b-1}
B_{{w_0},P_0}^{-1}(z)=I_p-P_0+P_0\frac{1}{b_{w_0}(z)},
\end{equation}
and so
\[
\mathcal B_{w_0,{P_0}}(z)= B_{w_0,{P_0}}(z) U\quad\text{with}\quad U=I_p-P_0-\frac{|w_0|}{w_0}P_0.
\]

In \eqref{degB} in the following proposition, ${\rm deg}$ refers to the McMillan degree of a matrix-valued rational function. We refer e.g. to
\cite{bgk1} for the definition and properties of the McMillan degree and to \cite{Dym} for further information on matrix-valued Blaschke products. We also note that \eqref{rkhs} is a special case of \eqref{CAStein}, and that the proposition can be viewed as a special case of
Proposition \ref{tmbl}.

\begin{proposition}
\label{milano}
Let $B_{w_0,P_0}$ be defined by \eqref{potapovvv}. Then
\begin{equation}
K_{B_{w_0,P_0}}(z,w)\stackrel{\rm def.}{=}\frac{I_p-B_{{w_0},P_0}(z)B_{{w_0},P_0}(w)^*}{1-z\overline{w}}=\frac{(1-|w_0|^2)}{(1-z\overline{w_0})(1-w_0\overline{w})}P_0.
\label{rkhs}
\end{equation}
and
\[
\mathbf H_2^{p\times q}\ominus B_{w_0,P_0}\mathbf H_2^{p\times q}=\left\{\frac{P_0V}{1-z\overline{w_0}},\,\, V\in\mathbb C^{p\times q}\right\}
\]
is the reproducing kernel Hilbert space with reproducing kernel $K_{B_{w_0,P_0}}(z,w)$ meaning that  the function $z\mapsto
K_{B_{w_0,P_0}}(z,w)X$ belongs to $\mathbf H_2^{p\times q}\ominus B_{w_0,P_0}\mathbf H_2^{p\times q}$ for every $X\in\mathbb C^{p\times q}$ and
\[
[F(\cdot),K_{B_{w_0,P_0}}(\cdot,w)X]=[P_0F(w_0),X].
\]
Finally (and with $q=1$)
\begin{equation}
\label{degB}
{\rm deg}\, B_{w_0,P_0}={\rm dim}\,\mathbf H_2^{p}\ominus B_{w_0,P_0}\mathbf H_2^{p}.
\end{equation}
\end{proposition}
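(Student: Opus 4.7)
The plan is to dispatch the three assertions in turn. The kernel identity \eqref{rkhs} is a direct computation; it then drives both the description of $\mathbf H_2^{p\times q}\ominus B_{w_0,P_0}\mathbf H_2^{p\times q}$ and the degree formula \eqref{degB}.

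For \eqref{rkhs}, I would expand $B_{w_0,P_0}(z)B_{w_0,P_0}(w)^*$ using the projection identities $P_0^2=P_0=P_0^*$ and $(I_p-P_0)P_0=0$. The cross terms vanish and one is left with
\[
B_{w_0,P_0}(z)B_{w_0,P_0}(w)^*=(I_p-P_0)+b_{w_0}(z)\overline{b_{w_0}(w)}P_0.
\]
Subtracting from $I_p$ and applying the scalar identity
\[
1-b_{w_0}(z)\overline{b_{w_0}(w)}=\frac{(1-|w_0|^2)(1-z\overline{w})}{(1-z\overline{w_0})(1-w_0\overline{w})},
\]
a standard rearrangement of the Szeg\"o kernel formula, yields \eqref{rkhs} after dividing by $1-z\overline{w}$.

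The step I expect to require the most care is the identification of the orthogonal complement. I would prove both inclusions. For $\supseteq$, given $G\in\mathbf H_2^{p\times q}$, the function $H:=B_{w_0,P_0}G$ satisfies $H(w_0)=(I_p-P_0)G(w_0)$, hence $P_0H(w_0)=0$; applying the Szeg\"o-type reproducing identity $[H,Y/(1-z\overline{w_0})]=Y^*H(w_0)$ in $\mathbf H_2^{p\times q}$ with $Y=P_0V$ gives $[B_{w_0,P_0}G,P_0V/(1-z\overline{w_0})]=V^*P_0H(w_0)=0$. For $\subseteq$, I would invoke a Beurling--Lax style argument, exploiting that $B_{w_0,P_0}$ is inner (a direct consequence of $|b_{w_0}|=1$ on $\partial\mathbb D$, whence $B_{w_0,P_0}^*B_{w_0,P_0}=I_p$ on the boundary), so that multiplication by $B_{w_0,P_0}$ is an isometry on $\mathbf H_2^{p\times q}$. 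Writing
\[
K_{B_{w_0,P_0}}(\cdot,w)X=\frac{X}{1-z\overline{w}}-B_{w_0,P_0}(z)\frac{B_{w_0,P_0}(w)^*X}{1-z\overline{w}},
\]
this isometry lets me verify both that $K_{B_{w_0,P_0}}(\cdot,w)X\in\mathbf H_2^{p\times q}\ominus B_{w_0,P_0}\mathbf H_2^{p\times q}$ and that the reproducing identity $[F,K_{B_{w_0,P_0}}(\cdot,w)X]=X^*F(w)$ holds for every $F$ in this complement. If such an $F$ were orthogonal to every $K_{B_{w_0,P_0}}(\cdot,w)X$, then $X^*F(w)=0$ for all $w\in\mathbb D$ and $X\in\mathbb C^{p\times q}$, forcing $F\equiv 0$. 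Hence the closed linear span of these kernel sections is the entire complement, and by \eqref{rkhs} each section is a scalar multiple of $P_0X/(1-z\overline{w_0})$, which gives the claimed equality.

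For \eqref{degB}, the description just obtained realizes $\mathbf H_2^p\ominus B_{w_0,P_0}\mathbf H_2^p$ as the image of the linear map $V\mapsto P_0V/(1-z\overline{w_0})$ from $\mathbb C^p$, whose dimension equals $\mathrm{rank}\,P_0$, which by definition coincides with $\deg B_{w_0,P_0}$.
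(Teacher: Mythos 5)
Your proof is correct, and for \eqref{rkhs} it is exactly the paper's computation: expand $B_{w_0,P_0}(z)B_{w_0,P_0}(w)^*$ using $P_0(I_p-P_0)=0$ and divide the scalar identity for $1-b_{w_0}(z)\overline{b_{w_0}(w)}$ by $1-z\overline{w}$. You diverge on the other two claims. For the identification of the complement, the paper works entirely inside reproducing-kernel machinery: it reads off from \eqref{rkhs} that $\mathcal H(K_{B_{w_0,P_0}})$ is the finite-dimensional space $\left\{P_0V/(1-z\overline{w_0})\right\}$, notes that \eqref{rkhs} forces this space to sit isometrically inside $\mathbf H_2^{p\times q}$, and then invokes complementation of positive kernels applied to the decomposition $\frac{I_p}{1-z\overline{w}}=K_{B_{w_0,P_0}}(z,w)+\frac{B(z)B(w)^*}{1-z\overline{w}}$ to conclude that $\mathcal H(K_{B_{w_0,P_0}})=\mathbf H_2^{p\times q}\ominus B_{w_0,P_0}\mathbf H_2^{p\times q}$. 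You instead check the two inclusions by hand: $\supseteq$ via $P_0\bigl(B_{w_0,P_0}G\bigr)(w_0)=0$ and the Szeg\"o reproducing property, and $\subseteq$ via the isometry of multiplication by the inner factor together with the density of the kernel sections in the complement. This is more elementary and self-contained, at the cost of length; both routes are standard. One reassurance: the reproducing identity you actually establish, $[F,K_{B_{w_0,P_0}}(\cdot,w)X]=X^*F(w)$ for $F$ in the complement, is the correct normalization; the formula displayed in the statement, $[F,K_{B_{w_0,P_0}}(\cdot,w)X]=[P_0F(w_0),X]$, holds only up to the scalar factor $(1-|w_0|^2)/(1-w\overline{w_0})$ (the two agree at $w=w_0$), so you have not proved the wrong thing. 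Finally, for \eqref{degB} you take ${\rm deg}\,B_{w_0,P_0}={\rm rank}\,P_0$ as the definition given after \eqref{potapovvv}, which makes the identity immediate from your description of the complement; the paper instead reads ${\rm deg}$ as the McMillan degree and cites the general identification of the McMillan degree of a rational inner function with the dimension of its associated kernel space. Under the McMillan-degree reading your argument needs that cited fact (or the observation that a minimal realization of $B_{w_0,P_0}$ has state dimension ${\rm rank}\,P_0$) as a one-line supplement.
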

\begin{proof} We put the proof for completeness. In the proof we write $B(z)$ rather than $B_{w_0,P_0}$ to ease the notation. We have
Since $P_0(I_p-P_0)=0$ we have
\[
B(z)B(w)^*=I_p-P_0+P_0b_{w_0}(z)\overline{b_{w_0}(w)},
\]
and so
\[
I_p-B(z)B(w)^*=P_0(1-b_{w_0}(z)\overline{b_{w_0}(w)}).
\]
Equation \eqref{rkhs} follows in since
\[
\frac{1-b_{w_0}(z)\overline{b_{w_0}(w)}}{1-z\overline{w}}=\frac{(1-|w_0|^2)}{(1-z\overline{w_0})(1-w_0\overline{w})}.
\]
It follows that the $\mathbb C^{p\times p}$-valued function $K_{B_{w_0,P_0}}(z,w)$ is positive definite in the open unit disk, and that the
associated reproducing kernel Hilbert space $\mathcal H(K_{B{w_0,P_0}})$ of $\mathbb C^{p\times q}$-valued functions is exactly the set of functions of the form
$z\mapsto \frac{P_0V}{1-z\overline{w_0}}$ when  $V$ varies in $\mathbb C^{p\times q}$.
Equation \eqref{rkhs} also implies that the space $\mathcal H(K_{B{w_0,P_0}})$ is isometrically included in $\mathbf H_2^{p\times q}$.
That
\[\mathcal H(K_{B{w_0,P_0}})=\mathbf H_2^{p\times q}\ominus B_{w_0,P_0}\mathbf H_2^{p\times q}\]
follows then from the kernel decomposition
\[
\frac{I_p}{1-z\overline{w}}=\frac{I_p-B(z)B(w)^*}{1-z\overline{w}}\frac{B(z)B(w)^*}{1-z\overline{w}}.
\]
The last claim follows from the identification of the McMillan degree of an unitary rational function and the dimension of its associated reproducing kernel space; see for instance \cite{ad3,ag2} for the latter.
\end{proof}

We note that in Proposition \ref{milano} one can replace $B_{w_0,P_0}$ by ${\mathcal B}_{w_0,P_0}$. It holds that
\[
K_{B_{w_0,P_0}}(z,w)=K_{{\mathcal B}_{w_0,P_0}}(z,w).
\]

\begin{lemma} Let $H\in\mathbf H_2^{p\times q}$ be such that $P_0H(w_0)=0_{p\times q}$. Then
\[
G=B_{{w_0},P_0}^{-1}H\in\mathbf H_2^{p\times q}
\]
and
\begin{equation}
\label{paris}
[H,H]=[G,G].
\end{equation}
\end{lemma}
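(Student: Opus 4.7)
My plan is to exploit the explicit inverse \eqref{b-1}, namely $B_{w_0,P_0}^{-1}(z) = (I_p-P_0) + P_0/b_{w_0}(z)$, together with the orthogonal decomposition $H = (I_p - P_0)H + P_0 H$. Substituting gives
\[
G(z) = (I_p - P_0)H(z) + \frac{P_0 H(z)}{b_{w_0}(z)},
\]
since $P_0(I_p - P_0) = 0$. The first summand is manifestly in $\mathbf H_2^{p\times q}$ as a constant matrix acting on $H$. For the second summand, the hypothesis $P_0 H(w_0) = 0_{p\times q}$ ensures that the numerator $P_0 H$ vanishes at $w_0$, so the apparent pole of $1/b_{w_0}$ at $w_0$ is removable, and $P_0 H / b_{w_0}$ is holomorphic on $\mathbb D$.

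To upgrade holomorphy on $\mathbb D$ to membership in $\mathbf H_2^{p\times q}$, and simultaneously obtain \eqref{paris}, I would work on the boundary. On the unit circle one has $|b_{w_0}(e^{it})| = 1$, and using $P_0(I_p - P_0) = 0$ and $P_0 = P_0^2 = P_0^*$ one computes
\[
B_{w_0,P_0}(e^{it})^* B_{w_0,P_0}(e^{it}) = (I_p - P_0) + |b_{w_0}(e^{it})|^2 P_0 = I_p,
\]
so $B_{w_0,P_0}$ is unitary-valued on $\partial\mathbb D$. Hence
\[
G(re^{it})^* G(re^{it}) \xrightarrow[r\to 1]{} H(e^{it})^* B_{w_0,P_0}(e^{it})^{-*} B_{w_0,P_0}(e^{it})^{-1} H(e^{it}) = H(e^{it})^* H(e^{it})
\]
in $L^1$ of the circle (using the $L^2$ boundary behavior of $H$ and the fact that the only potentially problematic factor $1/b_{w_0}$ is tamed both by boundary unimodularity and by the zero of $P_0 H$ at $w_0$).

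Applying \eqref{innerprod2} with $F = G = G$ then yields $[G,G] = [H,H]$ as elements of $\mathbb C^{q\times q}$. In particular, $\|G\|^2 = \operatorname{Tr}[G,G] = \operatorname{Tr}[H,H] = \|H\|^2 < \infty$, so $G \in \mathbf H_2^{p\times q}$, completing both assertions.

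The main technical obstacle is the interior/boundary interplay: one must justify that the holomorphic function $G$ on $\mathbb D$ actually has $\mathbf H_2$ boundary behavior, and that the naive pointwise identity $G^*G = H^*H$ on the circle really passes to the inner product \eqref{innerprod}. This is most cleanly done via the radial limit representation \eqref{innerprod2} applied to $G$ — which requires first checking $G \in \mathbf H_2^{p\times q}$ by, for instance, bounding $\int_0^{2\pi} \operatorname{Tr}(G(re^{it})^* G(re^{it})) dt$ uniformly in $r < 1$ using the decomposition above and the vanishing of $P_0 H$ at $w_0$ to control $P_0 H/b_{w_0}$ near $w_0$.
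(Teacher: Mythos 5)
Your argument is correct and follows essentially the same route as the paper: use the explicit inverse \eqref{b-1} to isolate the potentially singular term $P_0H/b_{w_0}$, remove the singularity at $w_0$ via the hypothesis $P_0H(w_0)=0$ (the paper writes $P_0H(z)=(z-w_0)R(z)$ so that $P_0H/b_{w_0}=R(z)(1-z\overline{w_0})$), and then invoke the unitarity of $B_{w_0,P_0}$ on the circle together with \eqref{innerprod2} to get \eqref{paris}. You are in fact slightly more careful than the paper about first establishing $G\in\mathbf H_2^{p\times q}$ before applying the boundary representation, which is a point the paper glosses over.
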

\begin{proof}
\eqref{b-1}
\[
B_{{w_0},P_0}^{-1}(z)=I_p-P_0+P_0\frac{1}{b_{w_0}(z)}.
\]
Write $P_0H(z)=(z-w_0)R(z)$, where $R$ is $\mathbb C^{k\times q}$-valued and analytic in a neighborhood of the origin. Using
\eqref{b-1} we have for $z\not=w_0$
\[
B_{{w_0},P_0}^{-1}(z)H(z)=P_0H(z)\frac{1}{b_{w_0}(z)}=R(z)(1-z\overline{w_0}),
\]
and the point $w_0$ is a removable singularity of $P_0 H$. Hence,
$B_{{w_0},P_0}^{-1}(z)H(z)$ has a removable singularity at $w_0$. Furthermore, since $B_{{w_0},P_0}(e^{it})^*B_{{w_0},P_0}(e^{it})=I_p$, and using \eqref{innerprod2}, we have \eqref{paris}.
\end{proof}

\section{Backward-shift invariant subspaces}
\label{sec2}
\setcounter{equation}{0}
We define for $\alpha\in\mathbb C$ the resolvent-like operator
\[
R_\alpha f(z)=\begin{cases}\,\, \dfrac{f(z)-f(\alpha)}{z-\alpha},\,\,\, z\not=\alpha,\\
\,\, f^\prime(\alpha),\,\,\,\,\,\,\,\quad\hspace{0.6cm} z=\alpha,\end{cases}
\]
where the (possibly vector-valued) function $f$ is analytic in a neighborhood of $\alpha$.\\

A finite dimensional space $\mathcal M$ of $\mathbb C^{p\times q}$-valued functions analytic in a neighborhood of the origin is $R_0$-invariant if and only if there exists a pair of matrices $(C,A)\in\mathbb C^{p\times N}\times\mathbb C^{N\times N}$ which is observable, meaning $\cap_{u=0}^\infty \ker CA^u=\left\{0\right\}$ and
\[
\mathcal M=\left\{F(z)=C(I_N-zA)^{-1}X,\quad X\in\mathbb C^{N\times q}\right\}.
\]

The following proposition is a particular case of the Beurling-Lax theorem in the finite dimensional setting.

\begin{proposition}
\label{tmbl}
Let $(C,A)\in\mathbb C^{p\times N}\times\mathbb C^{N\times N}$ be an observable pair of matrices, and let $\mathcal M$ denote the span of the functions of the
form $F(z)=C(I_N-zA)^{-1}X$, where $X$ runs through $\mathbb C^{N\times q}$. Then $\mathcal M\subset \mathbf H_2^{p\times q}$ if and only if $\rho(A)<1$.
When this is the case, we have $\mathcal M=\mathbf H_2^{p\times q}\ominus B\mathbf H_2^{p\times q}$, that is,
\[
\mathcal M^{\perp}=B\mathbf H_2^{p\times q},
\]
%
where $B$ is a finite Blaschke product, defined  up to a unitary right constant, by the formula
\begin{equation}
\label{B}
B(z)=I_p-(1-z)C(I_N-zA)^{-1}\mathbf P^{-1}(I_N-A)^{-*}C^*,
\end{equation}
with
\begin{equation}
\label{Pstein}
\mathbf P=\sum_{u=0}^\infty A^{*u}C^*CA^u.
\end{equation}
\end{proposition}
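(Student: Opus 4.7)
The plan is to argue the equivalence and the formula for $B$ separately, passing from the description of $\mathcal{M}$ to its reproducing kernel, then invoking Beurling--Lax to exhibit $B$.

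First I settle the equivalence $\mathcal{M}\subset\mathbf{H}_2^{p\times q}\iff\rho(A)<1$. For sufficiency, writing $C(I_N-zA)^{-1}X=\sum_{n\ge 0}CA^nX\,z^n$ and using Gelfand's spectral radius formula yields geometric decay of the Taylor coefficients, so every such $F$ lies in $\mathbf{H}_2^{p\times q}$. For necessity, if $\rho(A)\ge 1$ choose an eigenpair $Av=\lambda v$ with $|\lambda|\ge 1$; observability forces $Cv\ne 0$, and a matrix $X$ with $v$ in some column produces a column of $F$ equal to $Cv/(1-z\lambda)$, which does not lie in $\mathbf{H}_2^{p}$.

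Next I identify the reproducing kernel of $\mathcal{M}$. The linear map $X\mapsto F_X:=C(I_N-zA)^{-1}X$ is surjective onto $\mathcal{M}$ by definition and injective by observability. A termwise computation using the power series and \eqref{Pstein} shows $[F_X,F_Y]=Y^*\mathbf{P}X$; the convergence of \eqref{Pstein} is guaranteed by $\rho(A)<1$, and observability makes $\mathbf{P}$ positive definite (since $x^*\mathbf{P}x=\sum_u\|CA^ux\|^2$). So the parameterization is a bijective isometry from $(\mathbb{C}^{N\times q},(X,Y)\mapsto Y^*\mathbf{P}X)$ onto $\mathcal{M}$, and the reproducing kernel of $\mathcal{M}$ is
\[
K(z,w)=C(I_N-zA)^{-1}\mathbf{P}^{-1}(I_N-\overline{w}A^*)^{-1}C^*.
\]

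The third step is Beurling--Lax. Because $R_0[C(I_N-zA)^{-1}X]=CA(I_N-zA)^{-1}X$, the space $\mathcal{M}$ is $R_0$-invariant, hence $\mathcal{M}^{\perp}$ is shift-invariant and equals $B\mathbf{H}_2^{p\times q}$ for some inner $B$, rational (as $\mathcal{M}$ is finite-dimensional) and determined up to a unitary right factor. Arguing exactly as in Proposition \ref{milano}, the reproducing kernel of $\mathbf{H}_2^{p\times q}\ominus B\mathbf{H}_2^{p\times q}$ is $(I_p-B(z)B(w)^*)/(1-z\overline{w})$, which must coincide with $K(z,w)$. Since $B$ is a rational inner function, $B(1)$ is unitary; normalizing $B$ on the right so that $B(1)=I_p$ and specializing the kernel identity to $w=1$ gives
\[
B(z)=I_p-(1-z)K(z,1)=I_p-(1-z)C(I_N-zA)^{-1}\mathbf{P}^{-1}(I_N-A)^{-*}C^*,
\]
which is exactly \eqref{B}.

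The main obstacle will be the \emph{a posteriori} verification that the $B$ defined by \eqref{B} is really inner, i.e.\ that $(I_p-B(z)B(w)^*)/(1-z\overline{w})=K(z,w)$. Expanding the left-hand side produces a cross term containing $\mathbf{P}^{-1}(I_N-A^*)^{-1}C^*C(I_N-A)^{-1}\mathbf{P}^{-1}$, which I would simplify by applying the Stein equation $\mathbf{P}-A^*\mathbf{P}A=C^*C$ in the symmetric form $C^*C=(I_N-A^*)\mathbf{P}+A^*\mathbf{P}(I_N-A)$; this collapses the three remaining terms onto $(1-z\overline{w})K(z,w)$. Alternatively, this identity is a standard instance of the realization theory for inner rational matrix functions, as recorded in \cite{ad3,ag2}, where it underlies the correspondence between the McMillan degree and the dimension of the associated de Branges--Rovnyak space.
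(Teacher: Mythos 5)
Your proposal is correct and follows essentially the same route as the paper: the first claim via the power series expansion and observability, and the second via the Stein equation $\mathbf P-A^*\mathbf P A=C^*C$ together with the kernel identity $C(I_N-zA)^{-1}\mathbf P^{-1}(I_N-wA)^{-*}C^*=\bigl(I_p-B(z)B(w)^*\bigr)/(1-z\overline w)$, which the paper likewise reduces to a direct computation. The only additions are welcome expository ones — the Gram-matrix computation $[F_X,F_Y]=Y^*\mathbf P X$ identifying the reproducing kernel of $\mathcal M$, and the derivation of \eqref{B} by normalizing $B(1)=I_p$ and setting $w=1$ — which make explicit what the paper leaves implicit.
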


\begin{proof} The first claim follows from the series expansion
\[
C(I_N-zA)=\sum_{u=0}^\infty z^uCA^u,
\]
and from the observability of the pair $(C,A)$.\smallskip

To prove the second claim we remark that \eqref{Pstein} indeed converges since $\rho(A)<1$ and that the matrix $P$ is the unique solution of the Stein equation
\begin{equation}
\label{stein}
\mathbf{P}-A^*\mathbf{P}A=C^*C.
\end{equation}
The second claim follows then from the identity
\begin{equation}
\label{CAStein}
C(I_N-zA)^{-1}\mathbf{P}^{-1}(I_N-wA)^{-*}C^*=\frac{I_p-B(z)B(w)^*}{1-z\overline{w}},
\end{equation}
which is proved by a direct computation, taking into account \eqref{stein}.
\end{proof}

Using the above theorem, or using state space methods, one can prove that a finite Blasckhe product is a finite product of degree one Blaschke factors. This result originates with the
work of Potapov \cite{pootapov}; see e.g. \cite{ag2} for a proof.\smallskip

Note that $q=1$ in the next proposition.

\begin{proposition}
Let $B$ be a $\mathbb C^{p\times p}$-valued Blaschke product. There is a one-to-one correspondence between factorizations $B=B_1B_2$
of $B$ into two Blaschke products (up to a right multiplicative unitary constant $U$ for $ B_1$ and the corresponding left multiplicative constant $U^{-1}$ for $B_2$) and
$R_0$-invariant subspaces of $\mathbf H_2^{p}\ominus B\mathbf H_2^p$.
\end{proposition}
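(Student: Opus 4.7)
The plan is to set up the correspondence in both directions and then verify the stated equivalence. Throughout I write $\mathcal H(B)$ for $\mathbf H_2^p\ominus B\mathbf H_2^p$; by Proposition \ref{tmbl} this is always finite dimensional, $R_0$-invariant, and admits a realization $C(I_N-zA)^{-1}$ with $\rho(A)<1$.

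\textbf{From factorizations to $R_0$-invariant subspaces.} Given a factorization $B=B_1B_2$ into finite Blaschke products, I first observe that $B\mathbf H_2^p=B_1B_2\mathbf H_2^p\subset B_1\mathbf H_2^p$, so, taking orthogonal complements inside $\mathbf H_2^p$,
\[
\mathcal H(B_1)=\mathbf H_2^p\ominus B_1\mathbf H_2^p\ \subset\ \mathbf H_2^p\ominus B\mathbf H_2^p=\mathcal H(B).
\]
Proposition \ref{tmbl} (applied to $B_1$) gives that $\mathcal H(B_1)$ is already $R_0$-invariant, so this half of the correspondence is immediate. Moreover, replacing $B_1$ by $B_1U$ (and $B_2$ by $U^{-1}B_2$) with $U$ unitary does not change $B_1\mathbf H_2^p$, and hence does not change $\mathcal H(B_1)$; so the map factorization $\mapsto$ subspace is well defined on equivalence classes.

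\textbf{From $R_0$-invariant subspaces to factorizations.} Let $\mathcal M\subset\mathcal H(B)$ be any $R_0$-invariant subspace. Being a subspace of a finite dimensional space it is finite dimensional and sits between finite dimensional observable-pair realizations, so Proposition \ref{tmbl} produces a finite Blaschke product $B_1$, unique up to a right unitary factor, with $\mathcal M=\mathcal H(B_1)=\mathbf H_2^p\ominus B_1\mathbf H_2^p$. Taking complements again, $\mathcal H(B_1)\subset\mathcal H(B)$ translates into the inclusion
\[
B\mathbf H_2^p\ \subset\ B_1\mathbf H_2^p.
\]
Applying this column by column to the constant vector functions $e_j$ ($j=1,\ldots,p$), each column of $B$ lies in $B_1\mathbf H_2^p$; stacking the resulting $\mathbf H_2^p$-preimages yields a matrix-valued function $B_2\in\mathbf H_2^{p\times p}$ with $B=B_1B_2$. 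It remains to see that $B_2$ is a (finite) Blaschke product: it is rational because $B$ and $B_1^{-1}$ are, and on $\partial\mathbb D$ one has $B_2^*B_2=B^*(B_1^{-1})^*B_1^{-1}B=B^*B=I_p$ using unitarity of $B_1$ and $B$ on the circle; a rational, square, analytic and boundary-unitary matrix function is a finite matrix-valued Blaschke product.

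\textbf{Bijectivity and the unitary ambiguity.} The two constructions are inverses of one another: starting from $B=B_1B_2$ we recover $\mathcal H(B_1)$, and starting from $\mathcal M$ we recover $B_1$ with $\mathcal H(B_1)=\mathcal M$, whose right unitary freedom $B_1\leadsto B_1U$ is exactly absorbed by the compensating left factor $U^{-1}$ on $B_2$, leaving $B=B_1B_2$ intact. The step I expect to require the most care is verifying that the candidate $B_2$ produced from the inclusion $B\mathbf H_2^p\subset B_1\mathbf H_2^p$ is genuinely a finite Blaschke product (i.e.\ inner and rational) rather than merely an $\mathbf H_2^{p\times p}$ factor; this is handled by combining the boundary identity $B_2^*B_2=I_p$ on $\partial\mathbb D$ with the rationality inherited from $B$ and $B_1$, as above.
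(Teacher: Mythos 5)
The paper states this proposition without any proof at all (it is immediately followed by the next proposition), so there is no argument of the authors' to compare yours against; I can only assess your proposal on its own terms. Your construction is the standard one and is essentially correct \emph{when $B$ is a finite Blaschke product}: passing from $B=B_1B_2$ to $\mathbf H_2^{p}\ominus B_1\mathbf H_2^{p}$, recovering $B_1$ from a finite-dimensional $R_0$-invariant subspace via the observable realization and Proposition \ref{tmbl}, extracting $B_2=B_1^{-1}B$ from the inclusion $B\mathbf H_2^{p}\subset B_1\mathbf H_2^{p}$, checking that a rational, analytic, boundary-unitary matrix function is again a finite Blaschke product, and tracking the unitary ambiguity are all sound steps.

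Two caveats. First, your opening assertion that $\mathbf H_2^{p}\ominus B\mathbf H_2^{p}$ is ``always finite dimensional by Proposition \ref{tmbl}'' is not what that proposition says: it constructs $B$ \emph{from} a finite-dimensional $R_0$-invariant space, and the model space is finite dimensional only when $B$ is a \emph{finite} Blaschke product. The statement as printed says only ``Blaschke product'', and the paper elsewhere works with infinite matrix-valued Blaschke products (see \eqref{estonie}); if those are meant to be included, the appeal to Proposition \ref{tmbl} breaks down and must be replaced by the Beurling--Lax theorem (writing a closed $R_0$-invariant $\mathcal M$ as $\mathbf H_2^{p}\ominus\Theta\mathbf H_2^{\ell}$ with $\Theta$ possibly non-square, as in Proposition \ref{iowa-city}) together with Leech's factorization theorem to force $\ell=p$ and $B=\Theta B_2$ --- precisely the mechanism the authors deploy in the last remark of the section on the algorithm. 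Second, a small point: in the first direction, the $R_0$-invariance of $\mathbf H_2^{p}\ominus B_1\mathbf H_2^{p}$ is most directly seen from the fact that $B_1\mathbf H_2^{p}$ is invariant under multiplication by $z$ and $R_0$ is the adjoint of that multiplication on $\mathbf H_2^{p}$, rather than from Proposition \ref{tmbl}. Neither caveat affects the correctness of your argument in the finite case, which is almost certainly the intended reading given the surrounding text.
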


The following proposition uses Beurling-Lax theorem (see \cite{laxphi}). In the statement a $\mathbb C^{p\times \ell}$-valued inner function is an analytic
 $\mathbb C^{p\times \ell}$-valued function $\Theta$ such that the operator of multiplication by $\Theta$ is an isometry from
$ \mathbf H_2^{\ell\times q}$ into $\mathbf H_2^{p\times q}$
\begin{proposition}
\label{iowa-city}
Let $F\in\mathbf H_2^{p\times q}$ and assume that the closed linear span $\mathcal M(F)$ of the functions
\[
R_0^nFX,\quad n=0,1,2,\ldots\,\,\, and\,\,\, X\in\mathbb C^{q\times q}
\]
is strictly included in $\mathbf H_2^{p\times q}$. Then there exists a $\mathbb C^{p\times \ell}$-valued inner function $\Theta$ such that
\begin{equation}
\mathcal M(F)=\mathbf H_2^{p\times q}\ominus\Theta \mathbf H_2^{\ell\times q}.
\end{equation}
\end{proposition}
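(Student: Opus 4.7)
The plan is to reduce the statement to the matrix-valued Beurling--Lax theorem by passing to orthogonal complements and exploiting the standard duality between the backward shift $R_0$ and the forward shift $M_z:G(z)\mapsto zG(z)$ on the Hardy space.

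First I would verify that $\mathcal M(F)$ is, by construction, a closed $R_0$-invariant subspace of $\mathbf H_2^{p\times q}$: the operator $R_0$ is bounded on $\mathbf H_2^{p\times q}$ (indeed a contraction, as one sees from \eqref{appro1}--\eqref{appro2}), and it maps each generator $R_0^n FX$ to $R_0^{n+1}FX$, which is again of the same form. The definition also shows that $\mathcal M(F)$ is stable under right multiplication by arbitrary $Y\in\mathbb C^{q\times q}$, since $R_0^n F X\cdot Y=R_0^n F(XY)$. A direct power-series computation using \eqref{innerprod} then yields the adjoint relation $[R_0 F,G]=[F,M_z G]$ for all $F,G\in\mathbf H_2^{p\times q}$. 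Consequently, $\mathcal N:=\mathcal M(F)^{\perp}$ is a closed $M_z$-invariant subspace of $\mathbf H_2^{p\times q}$, nonzero by the hypothesis $\mathcal M(F)\subsetneq\mathbf H_2^{p\times q}$, and itself invariant under right multiplication by $\mathbb C^{q\times q}$.

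At this point I would invoke the matrix-valued Beurling--Lax theorem of \cite{laxphi}: any nonzero closed $M_z$-invariant subspace of $\mathbf H_2^{p\times q}$ that is stable under right multiplication by $\mathbb C^{q\times q}$ is of the form $\mathcal N=\Theta\,\mathbf H_2^{\ell\times q}$ for some $\mathbb C^{p\times\ell}$-valued inner function $\Theta$, where $1\leq\ell\leq p$ is the dimension of the wandering subspace $\mathcal N\ominus M_z\mathcal N$ measured on any one column. Taking orthogonal complements one last time yields $\mathcal M(F)=\mathbf H_2^{p\times q}\ominus\Theta\mathbf H_2^{\ell\times q}$, which is precisely the desired identity.

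The main point requiring care is the passage to the $p\times q$ form of Beurling--Lax itself. I would handle it by identifying $\mathbf H_2^{p\times q}$ with $q$ orthogonal copies of $\mathbf H_2^p$ (one per column), on which $M_z$ acts diagonally; the right $\mathbb C^{q\times q}$-invariance then forces $\mathcal N$ to consist of the same $M_z$-invariant subspace $\mathcal N_0\subseteq\mathbf H_2^p$ on each column, and the classical vector-valued Beurling--Lax theorem applied to $\mathcal N_0$ supplies the inner function $\Theta$ with the correct number of columns. Everything else is bookkeeping.
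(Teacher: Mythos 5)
Your proposal is correct and follows essentially the same route as the paper: observe that $\mathcal M(F)$ is $R_0$-invariant, pass to the orthogonal complement, which is then invariant under multiplication by $z$, and invoke the Beurling--Lax theorem. The extra details you supply (the adjoint relation $[R_0F,G]=[F,M_zG]$, the right $\mathbb C^{q\times q}$-invariance, and the column-wise reduction to $\mathbf H_2^{p}$) are correct and merely flesh out what the paper leaves implicit.
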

\begin{proof}
The space $\mathcal M(F)$ is $R_0$ invariant, and so its
orthogonal complement $(\mathcal M(F))^{\perp}$ is invariant by multiplication by $z$. The result follows then from the Beurling-Lax theorem.
\end{proof}

Note that $\Theta$ need not be square; for instance, if $p=2$, we can have
\[
\Theta(z)=\begin{pmatrix}0\\ b(z)\end{pmatrix},
\]
where $b$ is a Blaschke product. Then,
\[
\mathcal M(F)=\left\{\begin{pmatrix}f\\g\end{pmatrix}\,\,,\,\,f\in\mathbf H_2(\mathbb D)\,\,\text{and}\,\, g\in\mathbf H_2(\mathbb D)\ominus b\mathbf H_2(\mathbb D)\right\}.
\]
Still for $p=2$, the case
\[
\Theta(z)=\frac{1}{\sqrt{2}}\begin{pmatrix}1\\ b(z)\end{pmatrix},
\]
where
\[
b(z)=\prod_{n=1}^N\frac{z-w_n}{1-z\overline{w_n}}=
c_0+\sum_{n=1}^N\frac{c_n}{1-z\overline{w_n}}\quad\text{for uniquely defined $c_0,\ldots, c_N\in\mathbb C$},
\]
when the zeros of the Blaschke product are all
different from $0$ and simple, leads to
\[
\mathcal M(F)=\left\{ \begin{pmatrix}
\overline{c_0}f(z)+\sum_{n=1}^N
\overline{c_n}\frac{zg(z)-w_ng(w_n)}{z-w_n}
 \\ g(z) \end{pmatrix},\, g\in\mathbf H_2\right\}
\]
where we have used (for instance) \cite[Exercise 8.3.1]{CAPB2} to compute the first component.\\

These examples suggest a classification of the functions $F\in\mathbf H_2^{p\times q}$ depending on the value $\ell$ and the precise structure of $\Theta$.
\section{The algorithm}
\setcounter{equation}{0}
For any $w_0$ in the unit disc and any projection $P_0,$ there holds the orthogonal decomposition
\[
\mathbf H_2^{p\times q}=\left(\mathbf H_2^{p\times q}\ominus B_{w_0,P_0}\mathbf H_2^{p\times q}\right)\oplus B_{w_0,P_0}
\mathbf H_2^{p\times q},
\]
as is explained in the following lemma.
\begin{lemma}
For any given $w_0$ and $P_0$ formula
\eqref{bastille}  can be rewritten in a unique way as an orthogonal sum (orthogonal also with respect to the $[\cdot,\cdot]$ form)
\begin{equation}
\label{bastille222}
F(z)=M_0e_{w_0}(z)\sqrt{1-|w_0|^2}+B_{{w_0},P_0}(z)F_1(z),
\end{equation}
where $M_0\in\mathbb C^{p\times q}$ and $F_1\in\mathbf H_2^{p\times q}$. We have
$M_0e_{w_0}\sqrt{1-|w_0|^2}\in\left(\mathbf H_2^{p\times q}\ominus B_{w_0,P_0}\mathbf H_2^{p\times q}\right)$ and
$B_{{w_0},P_0}F_1\in B_{w_0,P_0}
\mathbf H_2^{p\times q}$. Finally,
\begin{equation}
\label{opera}
[F,F]=(1-|w_0|^2)[P_0F(w_0), F(w_0)]+[F_1,F_1].
\end{equation}
\end{lemma}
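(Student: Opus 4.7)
The plan is to read off the decomposition from \eqref{bastille} using the two lemmas already proved in this section together with the description of $\mathbf H_2^{p\times q}\ominus B_{w_0,P_0}\mathbf H_2^{p\times q}$ supplied by Proposition \ref{milano}. I would set $M_0:=P_0F(w_0)$ so that \eqref{bastille} becomes $F=M_0e_{w_0}\sqrt{1-|w_0|^2}+H$, and then rewrite the first summand as
\[
M_0e_{w_0}(z)\sqrt{1-|w_0|^2}=\frac{(1-|w_0|^2)\,P_0F(w_0)}{1-z\overline{w_0}},
\]
which, in view of Proposition \ref{milano}, places it in $\mathbf H_2^{p\times q}\ominus B_{w_0,P_0}\mathbf H_2^{p\times q}$. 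Next, the property $P_0H(w_0)=0$ from \eqref{xiHz0} lets me invoke the preceding lemma to obtain $F_1:=B_{w_0,P_0}^{-1}H\in\mathbf H_2^{p\times q}$ with $[H,H]=[F_1,F_1]$, so that $H=B_{w_0,P_0}F_1\in B_{w_0,P_0}\mathbf H_2^{p\times q}$. This immediately yields the decomposition \eqref{bastille222} together with the two asserted membership statements.

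For uniqueness, suppose $F=M_0'e_{w_0}\sqrt{1-|w_0|^2}+B_{w_0,P_0}F_1'$ is a second decomposition satisfying the stated membership conditions. Then $(M_0-M_0')e_{w_0}\sqrt{1-|w_0|^2}=B_{w_0,P_0}(F_1'-F_1)$ lies in the intersection
\[
\bigl(\mathbf H_2^{p\times q}\ominus B_{w_0,P_0}\mathbf H_2^{p\times q}\bigr)\cap B_{w_0,P_0}\mathbf H_2^{p\times q}=\{0\},
\]
so both sides vanish. Since $e_{w_0}$ is a nowhere-vanishing scalar function and since multiplication by $B_{w_0,P_0}$ is injective on $\mathbf H_2^{p\times q}$, this forces $M_0=M_0'$ and $F_1=F_1'$.

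The matrix-valued orthogonality $[H_0,B_{w_0,P_0}F_1]=[H_0,H]=0$, and hence also the scalar version $\|F\|^2=\|H_0\|^2+\|B_{w_0,P_0}F_1\|^2$, are already contained in \eqref{orthodecomp}. Finally, \eqref{opera} follows by combining \eqref{orthodecomp}, the identity $[H,H]=[F_1,F_1]$, and the direct evaluation
\[
[H_0,H_0]=(1-|w_0|^2)\|e_{w_0}\|^2\, F(w_0)^*P_0^*P_0F(w_0)=(1-|w_0|^2)[P_0F(w_0),F(w_0)],
\]
in which one uses $\|e_{w_0}\|=1$, $P_0^*P_0=P_0$, and the fact that for constant matrices $A,B$ one has $[A,B]=B^*A$. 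No step presents a real obstacle; the only subtlety worth noting is that the condition that the first summand belong to $\mathbf H_2^{p\times q}\ominus B_{w_0,P_0}\mathbf H_2^{p\times q}$ automatically forces $M_0$ to lie in the range of $P_0$, which is consistent with the choice $M_0=P_0F(w_0)$ and is what makes uniqueness work cleanly.
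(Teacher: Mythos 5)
Your proof is correct and follows essentially the same route as the paper: set $M_0=P_0F(w_0)$, use the two preceding lemmas (via $P_0H(w_0)=0$ and $[H,H]=[F_1,F_1]$) together with Proposition \ref{milano} to identify the two summands, and conclude \eqref{opera} from the orthogonality \eqref{orthodecomp}. You additionally spell out the uniqueness argument and the explicit evaluation of $[H_0,H_0]$, both of which the paper leaves implicit; these details are accurate and harmless.
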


\begin{proof}
We have
\begin{equation}
F(z)=M_0e_{w_0}(z)\sqrt{1-|w_0|^2}+B_{{w_0},P_0}(z)F_1(z),
\end{equation}
where $M_0=P_0F(w_0)$ and $F_1=B_{{w_0},P_0}^{-1}\left(F-P_0F(w_0)e_{w_0}\sqrt{1-|w_0|^2}\right)\in\mathbf H_2^{p\times q}$. By Lemma \ref{milano},
\[
P_0F(w_0)e_{w_0}\sqrt{1-|w_0|^2}\in\mathbf H_2^{p\times q}\ominus B_{w_0,P_0}\mathbf H_2^{p\times q}.
\]
Furthermore,
\begin{equation}
[P_0F(w_0)e_{w_0}\sqrt{1-|w_0|^2},B_{{w_0},P_0}(z)F_1]=0_{q\times q}
\end{equation}
and so \eqref{opera} holds.
\end{proof}
Note that the decomposition \eqref{bastille} is non-trivial if and only if $F\not\equiv 0_{p\times q}$.\smallskip

Assume that in \eqref{bastille222} $F_1\not\equiv0$. We can then reiterate and, after fixing $k_1\in\left\{1,\ldots,p\right\}$,  get a decomposition of the form \eqref{bastille222} for $F_2$,
\begin{equation}
\label{bastille22}
F_1(z)=P_1F(w_1)e_{w_1}(z)\sqrt{1-|w_1|^2}+B_{{w_1},P_1}(z)F_2(z),
\end{equation}
where $w_1$ is any complex number in the disc, and $P_1$ is any orthogonal projection of rank $k_1$.
Thus $F$ admits the orthogonal (also with respect to the $[\cdot,\cdot]$ form) decomposition (with $M_1=P_1F(w_1)$)
\begin{equation}
\begin{split}
F(z)&=M_0e_{w_0}(z)\sqrt{1-|w_0|^2}+\\
&\hspace{5mm}+B_{w_0,P_0}(z)M_1e_{w_1}(z)\sqrt{1-|w_1|^2}+
B_{w_0,P_0}(z)B_{w_1,P_1}(z)F_2(z)
\end{split}
\label{algo2}
\end{equation}
along the decomposition
\[
\mathbf H_2^{p\times q}=\left(\mathbf H_2^{p\times q}\ominus B_{w_0,P_0}\mathbf H_2^{p\times q}\right)\oplus B_{w_0,P_0}\left(
\mathbf H_2^{p\times q}\ominus B_{w_1,P_1}\mathbf H_2^{p\times q}\right)\oplus
B_{w_0,P_0} B_{w_1,P_1}\mathbf H_2^{p\times q}
\]
of $\mathbf H_2^{p\times q}$. Note that
\[
\left(\mathbf H_2^{p\times q}\ominus B_{w_0,P_0}\mathbf H_2^{p\times q}\right)\oplus B_{w_0,P_0}\left(
\mathbf H_2^{p\times q}\ominus B_{w_1,P_1}\mathbf H_2^{p\times q}\right)
=\mathbf H_2^{p\times q}\ominus B_{w_0,P_0}B_{w_1,P_1}\mathbf H_2^{p\times q}.
\]

Iterating the algorithm we get a family $F_0=F,F_1,F_2,\ldots$ of functions in $\mathbf H_2^{p\times q}$ such that
\begin{equation}
F_{k}(z)=\left(B_{w_{k-1},P_{k-1}}(z)\right)^{-1}\left(F_{k-1}(z)-
M_{k-1}e_{w_{k-1}}(z)\sqrt{1-|w_{k-1}|^2}\right),\quad k=1,2,\ldots
\label{reduced2}
\end{equation}
where at each stage one takes a projection such that $P_kF_k\not\equiv 0$. If  there is no such projection it means that the algorithm ends at the given step.\\

The function $F_{k}$ is called the $k$-th reduced remainder  and is the matrix-valued analogue of  \eqref{red0}.
Let
\begin{equation}
\widetilde{B_0}(z)=P_0e_0(z)\quad\text{and}\quad \widetilde{B_k}(z)=P_ke_k(z)\stackrel{\curvearrowleft}{\prod_{u=0}^{k-1}} B_{w_u,P_u},\quad k=1,2,\ldots
\end{equation}
We have
\[
F(z)=\sum_{k=0}^{N} M_k\widetilde{B_k}(z)+{ B_{w_N,P_N}}(z)F_{N+1}(z).
\]

\begin{proposition}\label{notwith}
If ${ B_{w_N,P_N}}(z)F_{N+1}(z)=0,$ then the algorithm ends up after $N$ steps.  In such case $F$ is rational.
\end{proposition}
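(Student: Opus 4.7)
The plan is to extract two conclusions from the hypothesis $B_{w_N,P_N}(z)F_{N+1}(z)\equiv 0$: first, that $F_{N+1}$ itself is identically zero (so the iteration stops), and second, that the remaining representation of $F$ is a finite $\mathbb{C}^{p\times q}$-valued rational expression.

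First I would use the explicit inverse formula \eqref{b-1}, namely $B_{w_N,P_N}^{-1}(z)=I_p-P_N+P_N/b_{w_N}(z)$, which shows that $B_{w_N,P_N}(z)$ is invertible for every $z\in\mathbb D\setminus\{w_N\}$ (the only point where $b_{w_N}(z)=0$, and where $B_{w_N,P_N}(w_N)=I_p-P_N$ fails to be injective). Therefore from $B_{w_N,P_N}(z)F_{N+1}(z)=0$ one gets $F_{N+1}(z)=0$ for all $z\in\mathbb D\setminus\{w_N\}$, and since $F_{N+1}\in\mathbf H_2^{p\times q}$ is in particular analytic on $\mathbb D$, continuity forces $F_{N+1}\equiv 0_{p\times q}$.

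Once $F_{N+1}\equiv 0$, the iterative rule \eqref{reduced2} would produce only zero functions from step $N+1$ onwards, so the condition $P_kF_k\not\equiv 0$ fails for every choice of projection at step $N+1$; by the stopping criterion described just after \eqref{reduced2}, the algorithm terminates after $N$ steps and the decomposition reduces to
\[
F(z)=\sum_{k=0}^{N} M_k\widetilde{B_k}(z).
\]

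For the rationality claim, I would simply observe that each $\widetilde{B_k}$ is manifestly a $\mathbb C^{p\times p}$-valued rational function: the scalar factors $e_{w_k}(z)=\sqrt{1-|w_k|^2}/(1-z\overline{w_k})$ and $b_{w_u}(z)=(z-w_u)/(1-z\overline{w_u})$ are rational, and $B_{w_u,P_u}(z)=I_p-P_u+P_ub_{w_u}(z)$ is a rational matrix function built from the constant projection $P_u$ and $b_{w_u}$. A finite sum of products of such factors, multiplied on the left by the constants $M_k\in\mathbb C^{p\times q}$, is therefore a rational $\mathbb C^{p\times q}$-valued function, so $F$ is rational. No real obstacle is expected: the only point worth stating carefully is the invertibility of $B_{w_N,P_N}(z)$ on $\mathbb D\setminus\{w_N\}$ used to deduce $F_{N+1}\equiv 0$.
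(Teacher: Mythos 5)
Your proof is correct, and it is in fact more detailed than the one in the paper, though it follows a different route for the rationality claim. The paper's argument is structural: once the algorithm stops after $N$ steps, $F$ lies in $\mathbf H_2^{p\times q}\ominus B\mathbf H_2^{p\times q}$ for the finite Blaschke product $B=B_{w_N,P_N}\cdots B_{w_0,P_0}$, and one invokes the fact (from Proposition \ref{milano} / Proposition \ref{tmbl}) that the elements of such a backward-shift invariant complement are rational. You instead argue directly that the terminal representation $F=\sum_{k=0}^{N}M_k\widetilde{B_k}$ is a finite sum of products of rational scalar factors and constant matrices, hence rational; this is more elementary and avoids the Beurling--Lax machinery. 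You also supply a step the paper leaves implicit: deducing $F_{N+1}\equiv 0_{p\times q}$ from the hypothesis $B_{w_N,P_N}F_{N+1}\equiv 0$ via the pointwise invertibility of $B_{w_N,P_N}(z)$ on $\mathbb D\setminus\{w_N\}$ (clear from \eqref{b-1}) together with analyticity of $F_{N+1}$, which is exactly what justifies the claim that the iteration \eqref{reduced2} can produce no further nonzero reduced remainder. Both arguments are valid; the paper's has the advantage of identifying the ambient finite-dimensional model space, while yours is self-contained.
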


\begin{proof}
Indeed, if the algorithm finishes after a finite number of steps, there is a finite Blaschke product $B$ such that
$F\in\mathbf H_2^{p\times q}\ominus B\mathbf H_2^{p\times q}$, and the elements of the latter space are rational functions.
\end{proof}

If our selections of $w_k$ and $P_k$ follow the maximum selection principle (that is, because of the
choices of the point and the projection at each stage) we have the following result, which is the matrix-version of \cite[Theorem 2.2]{QWa1}.

\begin{theorem}
Suppose that at each step one selects $w_k$ and $P_k$ according to the maximum selection principle.
Then, the algorithm \eqref{reduced2} converges, meaning that
\[
\lim_{N\rightarrow N_0}{\rm Tr}\,[F(z)-\sum_{k=0}^{N} M_k\widetilde{B_k}(z),F(z)-\sum_{k=0}^{N} M_k\widetilde{B_k}(z)]=0,
\]
where $N_0$ can be finite or infinite.
In particular,
\begin{equation}
[F,F]=\sum_{k=0}^{N_0}[M_k,M_k],
\end{equation}
where $M_k=P_kF_k(w_k)$, $k=0,1,\ldots$, and
\begin{equation}
\label{fayetteville}
{\rm Tr}\,[F,F]=\sum_{k=0}^{N_0}{\rm Tr}\,[M_k,M_k].
\end{equation}
\end{theorem}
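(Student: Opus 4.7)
The plan is to iterate the orthogonal decomposition \eqref{opera} to obtain a Pythagorean-type identity, to identify the candidate norm-limit $\tilde F$ of the partial sums, and then to combine the maximum selection principle with the reproducing kernel property to show $\tilde F=F$ pointwise on $\mathbb D$.

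Iterating \eqref{bastille222} and \eqref{opera} produces the telescoping identity
\[
\mathrm{Tr}\,[F,F]=\sum_{k=0}^{N-1}(1-|w_k|^2)\,\mathrm{Tr}\,[P_kF_k(w_k),F_k(w_k)]+\mathrm{Tr}\,[F_N,F_N].
\]
All summands on the right are nonnegative, so $\alpha_N:=\mathrm{Tr}\,[F_N,F_N]$ is nonincreasing with limit $\alpha_\infty\geq 0$, and in particular $(1-|w_k|^2)\,\mathrm{Tr}\,[P_kF_k(w_k),F_k(w_k)]\to 0$. The same iteration gives a Pythagorean decomposition of $\mathbf{H}_2^{p\times q}$ along the descending chain $B_{w_0,P_0}\cdots B_{w_{N-1},P_{N-1}}\mathbf{H}_2^{p\times q}$, so the summands in $S_N:=\sum_{k=0}^{N-1}M_k\widetilde{B_k}$ are pairwise $[\,\cdot\,,\,\cdot\,]$-orthogonal, $S_N$ is a Cauchy sequence, and therefore converges in norm to some $\tilde F\in\mathbf{H}_2^{p\times q}$ with $\|F-\tilde F\|^2=\alpha_\infty$.

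Next I extract pointwise information from the maximum selection principle. For every $w\in\mathbb D$ and every rank-one orthogonal projection $P$, domination by a larger-rank projection gives
\[
(1-|w|^2)\,\mathrm{Tr}\,[PF_k(w),F_k(w)]\leq (1-|w_k|^2)\,\mathrm{Tr}\,[P_kF_k(w_k),F_k(w_k)]\longrightarrow 0.
\]
The sequence $\{F_k\}$ is bounded in the Hilbert space $\mathbf{H}_2^{p\times q}$, so every subsequence admits a further weakly convergent subsequence $F_{k_j}\rightharpoonup G$. Bounded point evaluation (the reproducing kernel of $\mathbf{H}_2^{p\times q}$) forces $F_{k_j}(w)\to G(w)$ entrywise for every $w\in\mathbb D$; passing to the limit in the displayed inequality yields $(1-|w|^2)\,|v^*G(w)|^2=0$ for every $w$ and every unit vector $v\in\mathbb C^p$, hence $G\equiv 0$. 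Every weak cluster point of $\{F_k\}$ is therefore zero, so $F_k\rightharpoonup 0$ weakly and in particular $F_k(w)\to 0$ for each $w\in\mathbb D$.

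The last step upgrades this pointwise decay to the desired norm convergence. Set $\mathbb B_{N-1}:=B_{w_0,P_0}\cdots B_{w_{N-1},P_{N-1}}$; by construction $F-S_N=\mathbb B_{N-1}F_N$. Using $P_k=P_k^2=P_k^*$ a direct calculation from \eqref{potapovvv} gives $B_{w_k,P_k}(w)^*B_{w_k,P_k}(w)=(I_p-P_k)+|b_{w_k}(w)|^2P_k\leq I_p$ for $w\in\mathbb D$, so $\|\mathbb B_{N-1}(w)\|\leq 1$ uniformly in $N$; combined with $F_N(w)\to 0$ this yields $(F-S_N)(w)\to 0$ at every $w\in\mathbb D$. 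On the other hand $S_N(w)\to\tilde F(w)$ by norm convergence and the reproducing kernel, so $F(w)=\tilde F(w)$ for every $w\in\mathbb D$, whence $F=\tilde F$ and $\alpha_\infty=\|F-\tilde F\|^2=0$. The identities for $[F,F]$ and $\mathrm{Tr}\,[F,F]$ then follow by letting $N\to\infty$ in the telescoping relation. The main obstacle is precisely this last step: the maximum selection principle only controls a pointwise weighted quantity, and $F_k\rightharpoonup 0$ is by itself too weak to force $\|F_k\|\to 0$; the key is that the remainder $F-S_N$ factors through the inner product $\mathbb B_{N-1}$, so pointwise decay of $F_N$ already yields pointwise decay of $F-S_N$, which is enough to identify $\tilde F$ with $F$.
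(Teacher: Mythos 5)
Your proof is correct and rests on the same three ingredients as the paper's own argument: the Pythagorean identity forcing $(1-|w_k|^2)\,\mathrm{Tr}\,[P_kF_k(w_k),F_k(w_k)]\to 0$, the maximality of the selection transferring this decay to every fixed pair $(w,P)$ (your rank-domination remark makes explicit what the paper only asserts in its final step), and the contractivity of the partial Blaschke products on $\mathbb D$ linking the point values of the remainder $F-S_N$ to those of $F_N$. The only difference is packaging: the paper argues by contradiction at a single point $b$ where the putative residual $G$ is detected to be nonzero, whereas you prove pointwise convergence everywhere and identify the norm limit directly --- and your weak-compactness detour is superfluous, since for fixed $w\in\mathbb D$ the displayed inequality already gives $F_k(w)\to 0$ without passing to subsequences.
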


\begin{proof} The proof follows the proof for the scalar case presented in \cite[Theorem 2.2]{QWa1}. Before proceeding, it is important to recall
that the maximum \eqref{toronto} is computed on all projections of given rank and all points in $\mathbb D$.
The case $N_0 < \infty$ means that the algorithm ceases after a finite number of steps. We then obtain a decomposition of $F$ into a sum of finite entries, and $F$ is rational. We now suppose that $N_0=\infty.$ Let
\[
G=F-\sum_{k=0}^{\infty} M_k\widetilde{B_k}\not\equiv 0.
\]
We proceed in a number of steps to get a contradiction.\\

STEP 1: {\sl It holds that
\[
[F,F]=\sum_{k=0}^N[M_k,M_k]+[F_{N+1},F_{N+1}]
\]
with
\begin{equation}
M_k=[F,\widetilde{B_k}].
\label{ng}
\end{equation}
}
This follows from the unitarity of the Blaschke factors $ B_{w_u,P_u}$ on the unit circle that
\[
[ \widetilde{B_k}(z), \widetilde{B_\ell}(z)]=\begin{cases}\,0_{p\times p}\quad \text{if}\,\, k,\\
                                             \,P_k\,\quad\hspace{2.9mm}\text{if}\,\,k=\ell,\end{cases}
\]
and the claim in the step follows.\\

STEP 2: {\sl Let $R_k=F-\sum_{u=0}^{k-1}M_u\widetilde{B_u}$. We have
\begin{equation}
[F,\widetilde{B_k}]=[R_k,\widetilde{B_k}]=[F_k,P_ke_k]
\label{toto}
\end{equation}
}

The first equality in \eqref{toto} follows from
\begin{equation}
\label{clo}
[\widetilde{B_k},\widetilde{B_u}]=0_{p\times p}\quad\text{for}\,\,\,u=0,\ldots, k-1.
\end{equation}
The second equality follows from
\[
[R_k,\widetilde{B_k}]=[\left(\stackrel{\curvearrowleft}{\prod_{u=0}^{k-1}} B_{w_u,P_u}\right)F_k,Pe_k\left(
\stackrel{\curvearrowleft}{\prod_{u=0}^{k-1}} B_{w_u,P_u}\right)],
\]
and from the unitarity of the factors $ B_{w_u,P_u}$ on the unit circle.\\

STEP 3: {\sl  There exist a projection $P$, which we assume of rank one, and a point
$b\in\mathbb D$  such that
\[
{\rm Tr}~[G,Pe_b]={\rm Tr}~(PG(b))\not=0.
\]
}
In view of \eqref{clo} the sum
$\sum_{k=0}^{\infty} M_k\widetilde{B_k}$ converges in $\mathbf H_2^{p\times q}$ and so $G$ is analytic in the open unit disk.
The claim in the step follows thenfrom the analyticity of $G$ inside the open unit disk.\\

STEP 4: {\sl In the notation of the previous step, we have
\begin{equation}
\label{qwe45}
\sqrt{1-|b|^2}|{\rm Tr}\,[PR_k(b)]|>\frac{|{\rm Tr}~[G,Pe_b]|}{2}.
\end{equation}
}\smallskip

In  view of \eqref{fayetteville}, and using the Cauchy-Schwarz inequality we see that there is $k_0\in\mathbb N$ such that for all $k\ge k_0$,
\[
|{\rm Tr}\,[\sum_{u=k}^\infty [M_v\widetilde{B_v},Pe_b]|<\frac{|{\rm Tr}~[G,Pe_b]|}{2}.
\]
Hence,
\[
\begin{split}
|{\rm Tr}\,[R_k,Pe_b]|+\frac{|{\rm Tr}~[G,Pe_b]|}{2}&>|{\rm Tr}\,[R_k,Pe_b]|+|{\rm Tr}\,[\sum_{u=k}^\infty [M_v\widetilde{B_v},Pe_b]|\\
&\ge |{\rm Tr}\,[G,Pe_b]|,
\end{split}
\]
so that $|{\rm Tr}\,[R_k,Pe_b]|>\frac{|{\rm Tr}~[G,Pe_b]|}{2}$. By the reproducing kernel property this inequality can be rewritten as
\eqref{qwe45}.\\

STEP 5: {\sl We conclude the proof.}\smallskip

By the Cauchy-Schwarz inequality, and since $ B_{w_n,P_n}(b)^*B_{w_n,P_n}(b)\le I_p$, we have
\[
|{\rm Tr}\,[PR_k(b)]|<({\rm Tr}\,P)^{1/2}({\rm Tr}\, PR_k(b)^*R_k(b)P)^{1/2}<({\rm Tr}\,P)^{1/2}({\rm Tr}\, PF_k(b)^*F_k(b)P)^{1/2},
\]
and so
\[
\sqrt{1-|b|^2}({\rm Tr}\,P)^{1/2}({\rm Tr}\, PF_k(b)^*F_k(b)P)^{1/2}>\frac{|{\rm Tr}~[G,Pe_b]|}{2}.
\]
Since $P$ has rank $1$, it has trace equal to $1$ and we have
\[
\sqrt{1-|b|^2}({\rm Tr}\, PF_k(b)^*F_k(b)P)^{1/2}>\frac{|{\rm Tr}~[G,Pe_b]|}{2}.
\]
Equation \eqref{fayetteville} implies that $\lim_{k\rightarrow\infty}M_k=0$. From \eqref{toto} and \eqref{ng}
and the Cauchy-Schwarz inequality we have $\lim_{k\rightarrow\infty}[F_k,P_ke_k]=0$, and so
\[
\lim_{k\rightarrow\infty} \sqrt{1-|a_k|^2}P_kF_k(a_k)=0_{p\times p},
\]
and so
\[
\lim_{k\rightarrow\infty} (1-|a_k|^2){\rm Tr}\,[P_kF_k(a_k),F_k(a_k)]=0,
\]
and hence a contradiction with the maximum selection condition \eqref{toronto}, since
the maximum \eqref{toronto} is computed on all projections of given rank and all points in $\mathbb D$.
\end{proof}

\begin{remark}{\rm In the above arguments one could also use normalized factors of the form \eqref{normalized}. It is needed to use them when one wishes the underlying Blaschke product to
converge. See Remark \ref{r56}.}
\label{r5}
\end{remark}

\begin{remark} If $F$ is a rational general function, and the maximum selection principle is used at each step, then the only possibility that the algorithm stops after a finite $N_0$ steps is the case when $N_0=1.$  In the case, the subspace $\mathcal M(F)$ of $\mathbf H_2^{p\times q}$ spanned by the functions $R_0^uFX$ where $u=0,1,\ldots$ and $X\in\mathbb C^{q\times q}$ is finite dimensional and $R_0$-invariant by construction. So it is of the form
$\mathbf H_2^{p\times q}\ominus B\mathbf H_2^{p\times q}$ for some finite Blaschke product $B$. This does not mean that a backward-shift algorithm is then performed inside this
space, and thus that it would end after a finite number of steps. In contrast, for a rational function, if we do not use the maximum selection principle but suitably select $w_k$ and $P_k,$ the algorithm can well stop after a finite $N_0$ steps, as concerned by Proposition \ref{notwith}.
\end{remark}

\begin{remark}
\label{r56}
{\rm
We now consider the case where we take normalized Blaschke factors (see Remark \ref{r5}).
When the algorithm does not end in a finite number of steps, two cases occur depending on whether the infinite matrix-valued
Blaschke product
\begin{equation}
\label{estonie}
\mathcal B(z)\stackrel{\rm def.}{=}\stackrel{\curvearrowleft}{\prod_{n=0}^\infty}\mathcal B_{w_n,P_n}(z)=\lim_{N\rightarrow\infty}\mathcal B_{w_N,P_N}(z)\cdots
\mathcal B_{w_1,P_1}(z)\mathcal B_{w_0,P_0}(z)
\end{equation}
converges or not. The first case can be achieved by requiring the  numbers $a_n$ to satisfy
$\sum_{n=0}^\infty (1-|a_n|)<\infty$ (see \cite{QWa1}). The infinite product \eqref{estonie} then converges for all $z\in\mathbb D$ (the proof is as in the scalar case (see for instance
\cite[TG IX.82]{bourbaki_tg} for infinite products in a normed algebra) and $F\in\mathbf H_2^{p\times q}\ominus\mathcal B\mathbf H_2^{p\times q}$.} The second case then occurs when $\sum_{n=0}^\infty (1-|a_n|)=\infty.$ In such case an infinite Blaschke product cannot be defined, but instead, the shift invariant space reduces to zero, and the backward shift invariant space coincides with the whole Hardy $H_2^{p\times q}$ space. The proof of this fact is based on the Beurling-Lax Theorem. In fact, if the backward shift invariant space does not
coincide with the Hardy $H_2^{p\times q}$ space, then its orthogonal complement is a non-trivial shift invariant space. By the Beurling-Lax Theorem the latter is of the form $\mathcal B\mathbf H_2^{p\times q},$ where $\mathcal B$ is the Blaschke product generated by the $w_k's$ and the $P_k's.$ But this contradicts with the condition  $\sum_{n=0}^\infty (1-|a_n|)=\infty.$
\end{remark}
\begin{remark}{\rm
Assume that the Blaschke product \eqref{estonie} converges. Then $F\in\mathbf H_2^{p\times q}\ominus\mathcal B\mathbf H_2^{p\times q}$. But this latter space is $R_0$ invariant, and so
\begin{equation}
\label{eq:inclusion}
\mathcal M(F)\subset\mathbf H_2^{p\times q}\ominus\mathcal B\mathbf H_2^{p\times q},
\end{equation}
and $F$ is not cyclic for $R_0$. Let $\Theta$ be the the $\mathbb C^{p\times \ell}$-valued function as in Theorem \ref{iowa-city}. The
isometric inclusion \eqref{eq:inclusion} implies that the kernel
\[
\frac{\Theta(z)\Theta(w)^*-\mathcal B(z)\mathcal B(w)^*}{1-z\overline{w}}=\frac{I_p-\mathcal B(z)\mathcal B(w)^*}{1-z\overline{w}}-\frac{I_p-\Theta(z)\Theta(w)^*}{1-z\overline{w}}
\]
is positive definite in $\mathbb D$. Leech's factorization theorem (see \cite{rr-univ}, \cite{MR3147402}, \cite{MR3147403}, \cite{add}) implies that there is a $\mathbb C^{\ell\times p}$-valued function $\Theta_1$ analytic and contractive in $\mathbb D$ and such that $\mathcal B(z)=\Theta(z)\Theta_1(z)$.
Since $\mathcal B$ takes unitary values almost everywhere on the unit circle it follows that $\ell=p$.}
\end{remark}

\section{The case of real signals}
\setcounter{equation}{0}
We  begin with two definitions. Let $A=(a_{jk})_{\substack{j=1,\ldots p\\ k=1,\ldots, q}}\in\mathbb C^{p\times q}$. We say that $A$ is real if the entries of
$A$ are real, that is $A=\overline{A}$, where $\overline{A}$ is the matrix with  $(j,k)$-entry $\overline{a_{jk}}$.\smallskip

A matrix-valued real signal of finite energy is a function of the form
\[
f(t)=A_0+\sum_{n=1}^\infty A_n\cos (nt)+B_n\sin (nt),
\]
where the matrices $A_n$ and $B_n$ belong to $\mathbb R^{p\times q}$ and such that (with $A^T$ denoting the transpose of the matrix $A$)
\[
{\rm Tr}\, (A_0^TA_0)+\sum_{n=1}^\infty{\rm Tr}\,(A_n^TA_n+B_n^TB_n)<\infty.
\]
Since
\[
f(t)=A_0+\sum_{n=1}^\infty A_n\frac{e^{int}+e^{-int}}{2}+B_n\frac{e^{int}-e^{-int}}{2i}
\]
we can rewrite $f(t)$
\[
f(t)=F(e^{it})=\sum_{n\in\mathbb Z}F_ne^{int},
\]
with
\[
F_n=\begin{cases} \, A_0\,,\quad\hspace{0.7cm}\text{if}\quad n=0,\\
\, \frac{A_n-iB_n}{2}\,,\quad\text{if}\quad n=1,2,\ldots\\
\, \frac{A_n+iB_n}{2}\,,\quad\text{if}\quad n=-1,-2,\ldots
\end{cases}
\]
Note that $F_{-n}=\overline{F_n}$. With these computations at hand we can state (in the statement $\mathbb T$ denotes  the unit circle):

\begin{proposition}
Let $F\in\mathbf L_2^{p\times q}(\mathbb T)$, with power series $F(e^{it})=\sum_{n\in\mathbb Z}F_ne^{int}$ and let $F_+(e^{it})=F_0+\sum_{n=1}^\infty F_ne^{int}$.
Then, $F_+\in\mathbf H_2^{p\times q}$ and
\begin{equation}
\label{totototo}
F(e^{it})=F_+(e^{it})+\overline{F_+(e^{it})}-F_0
\end{equation}
\end{proposition}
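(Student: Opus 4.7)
The proof naturally splits into two independent assertions; neither should present real difficulty, but the second relies on the real-signal structure established in the discussion immediately preceding the statement.

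My plan for the first assertion is to check membership of $F_+$ in $\mathbf H_2^{p\times q}$ directly against the characterization \eqref{appro1}--\eqref{appro2}. By construction, $F_+(z) = F_0 + \sum_{n=1}^\infty F_n z^n$ has only non-negative Fourier modes, so it only remains to verify the trace-summability of the coefficient sequence. Since the full Fourier expansion of $F \in \mathbf L_2^{p\times q}(\mathbb T)$ satisfies the Parseval-type identity
\[
\|F\|^2_{\mathbf L_2^{p\times q}(\mathbb T)} = \sum_{n\in\mathbb Z}{\rm Tr}\,(F_n^*F_n) <\infty,
\]
the truncation to $n\ge 0$ produces a convergent tail, i.e.\ $\sum_{n=0}^\infty {\rm Tr}\,(F_n^*F_n) < \infty$. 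Hence $F_+ \in \mathbf H_2^{p\times q}$ by \eqref{appro1}--\eqref{appro2}.

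For the identity \eqref{totototo}, I would exploit the real-signal hypothesis, which was shown in the paragraph above the proposition to translate into the symmetry $F_{-n} = \overline{F_n}$ for all $n\in\mathbb Z$, and in particular $F_0 = \overline{F_0}$. The computation is then purely formal: taking complex conjugates mode by mode,
\[
\overline{F_+(e^{it})} = \overline{F_0} + \sum_{n=1}^\infty \overline{F_n}\,e^{-int} = F_0 + \sum_{n=1}^\infty F_{-n}\,e^{-int} = F_0 + \sum_{m=-\infty}^{-1} F_m\, e^{imt}.
\]
Adding $F_+(e^{it})$ and subtracting the double-counted constant $F_0$ produces exactly $\sum_{n\in\mathbb Z} F_n e^{int} = F(e^{it})$, which is the claimed identity.

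The only conceptual subtlety to flag is that the identity \eqref{totototo} is not valid for arbitrary $F\in \mathbf L_2^{p\times q}(\mathbb T)$ but genuinely uses the real-signal hypothesis encoded by $F_{-n}=\overline{F_n}$; this should be stated explicitly in the proof so that the proposition is read in the context of the section. Apart from this clarification, there is no real obstacle: the first claim is an immediate consequence of Parseval and the definition of $\mathbf H_2^{p\times q}$, and the second is a one-line reindexing of the conjugated series.
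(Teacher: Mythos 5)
Your proposal is correct and follows essentially the same route as the paper: the paper's proof likewise conjugates $F_+$ term by term, invokes the relation $F_{-n}=\overline{F_n}$ (and $F_0=\overline{F_0}$) coming from the real-signal setup of the section, and reassembles the full Fourier series. Your added Parseval check that $F_+\in\mathbf H_2^{p\times q}$ and your explicit flagging of the real-coefficient hypothesis are sound clarifications of points the paper leaves implicit.
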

\begin{proof}
Let $F_-(e^{it})=\sum_{n=1}^\infty F_{-n}e^{-int}$. Since the Fourier coefficients are real we can write
\[
\begin{split}
\overline{F_+(e^{it})}&=F_0+\sum_{n=1}^\infty \overline{F_n}e^{-int}\\
&=F_0+\sum_{n=1}^\infty F_{-n}e^{-int}\\
&=F_0+F_-(e^{it}),
\end{split}
\]
and so \eqref{totototo} holds.
\end{proof}

The preceding result allows to approximate real matrix-valued  signals using the maximum selection principle algorithm presented in the previous sections.

\section{Concluding remarks}
\setcounter{equation}{0}
The method developed  in \cite{Qian_106,QWa1} is extended here to the matrix-valued case. The results have impacts to rational approximation and interpolation of matrix-valued functions. In a sequel to the present paper we may consider the case of the ball $\mathbb B_N$ of $\mathbb C^N$. Then the counterpart of Blaschke elementary factors exists
(see \cite{rudin-ball}), and Blaschke products can be defined; see \cite{akap1}. One has then to consider the Drury-Arveson space of the ball,
that is the reproducing kernel Hilbert space of functions analytic in $\mathbb B_N$ with reproducing kernel $\frac{1}{1-\sum_{j=1}^N
z_j\overline{w_j}}$ rather than the Hardy space of the ball, whose reproducing kernel is $\frac{1}{(1-\sum_{j=1}^N
z_j\overline{w_j})^N}$, see \cite{arveson-acta,MR80c:47010}. We note that in the later mentioned reproducing kernel Hilbert space, viz., the Hardy $\mathbf H_2$ space inside the ball, there exists the $H^\infty$-functional calculus of the radial Dirac operator $\sum_{k=1}^N z_k \frac{\partial}{\partial z_k},$ or, equivalently, the singular integral operator algebra generalizing the Hilbert transformation on the sphere (\cite{CQ}). More generally, one can consider complete Pick kernels, that is positive definite kernels whose inverse has one positive square, see  \cite{MR2154356,MR2394102,btv,GRS,quiggin}.

\bibliographystyle{plain}

\end{document}